\documentclass[a4paper,11pt]{amsart}
\usepackage{lmodern}
\usepackage{amsmath,amsfonts,amssymb,mathrsfs,charter,amsthm, multicol,color, hyperref}

\usepackage[inner=1.1in,outer=1.1in,height=9.3in]{geometry}


\newtheorem{theorem}[subsection]{Theorem}
\newtheorem{lemma}[subsection]{Lemma}
\newtheorem{cor}[subsection]{Corollary}

\numberwithin{equation}{section}
\newtheorem{prop}[subsection]{Proposition}


\newcommand{\dist}{{\mathop{\rm dist}}}
\newcommand{\ext}{{\mathop{\rm ext}}}
\newcommand{\conv}{{\mathop{\rm \overline{conv}}}}

\newcommand{\A}{{\mathcal A}}
\newcommand{\B}{{\mathcal B}}
\newcommand{\C}{{C}}

\newcommand{\hc}{{\mathcal H}}
\newcommand{\hk}{{\mathcal K}}

\newcommand{\eps}{\varepsilon}
\newcommand{\norm}[1]{\left\lVert#1\right\rVert}

\newcommand{\bF}{{\mathbb{F}}}
\newcommand{\bR}{{\mathbb{R}}}
\newcommand{\bC}{{\mathbb{C}}}
\newcommand{\bN}{{\mathbb{N}}}

\newcommand{\tr}{{\mathrm{tr }}}

\newcommand{\bra}{\langle}
\newcommand{\ket}{\rangle}
\newcommand{\tens}[1]{\mathbin{\mathop{\otimes}\limits_{#1}}}
\begin{document}


\title{Gateaux derivative of $C^*$ norm}
\author[S. Singla]{Sushil Singla}
\address{Department of Mathematics, Shiv Nadar University, NH-91, Tehsil Dadri, Gautam Buddha Nagar, U.P. 201314, India.}
\email{ss774@snu.edu.in}
\subjclass[2010]{Primary 49J50, 47N10, 46L30, 46L08, 46B20; Secondary 46L05, 41A52, 46M10} 
\keywords{Trace Class, Subdifferential Set, Gateaux differentiablity, smooth points, states, Birkhoff-James orthogonality, Hilbert $C^*$-modules}
\maketitle

\begin{abstract} 
We find an expression for Gateaux derivative of the $C^*$-algebra norm. This gives us alternative proofs or generalizations of various known results on the closely related notions of subdifferential sets, smooth points and Birkhoff-James orthogonality for spaces $\mathscr B(\hc)$ and $C_b(\Omega)$. We also obtain an expression for subdifferential sets of the norm function at $A\in\mathscr B(\hc)$ and a characterization of orthogonality of an operator $A\in\mathscr B(\hc, \hk)$ to a subspace, under the condition $\dist(A, \mathscr K(\hc))< \norm{A}$ and $\dist(A, \mathscr K(\hc, \hk))< \norm{A}$ respectively.  
\end{abstract}

\setlength{\parindent}{0pt}
\setlength{\parskip}{1.6ex}

\pagestyle{headings}

\section{Introduction}\label{intro}

Let $\bF$ stand for $\bR$ or $\bC$. Let $\A$ be a $C^*$-algebra over $\bF$. Let $\C_b(\Omega)$ be a $C^*$-algebra of bounded $\bF$-valued functions on a locally compact Hausdorff space $\Omega$ and $\C_0(\Omega)\subseteq \C_b(\Omega)$ be the space of functions vanishing at infinity.  It is well known that every commutative $C^*$-algebra is isomorphic to $\C_0(\Omega)$. Let $\hc$ and  $\hk$ be Hilbert spaces over $\bF$ with the inner product $\bra\cdot|\cdot\ket$. Let $\mathscr B(\hc, \hk)$ and $\mathscr K(\hc, \hk)$  be the normed spaces of bounded and compact $\bF$-linear operators from $\hc$ to $\hk$ with operator norm, respectively.  Let $\mathscr B(\hc)$ and $\mathscr K(\hc)$ stands for $\mathscr B(\hc, \hc)$ and $\mathscr K(\hc, \hc)$, respectively. It is also well known that any $C^*$-algebra is isomorphic to a $C^*$-subalgebra of $\mathscr B(\hc)$. 

Let $(V,\|\cdot\|)$ be a normed space over $\bF$. Let $v\in V$ and let $W$ be a subspace of $V$. Let $\dist(v,W)$ denotes $\inf\{\|v-w\|:w\in W\}.$ The \emph{Gateaux derivative of $\|\cdot\|$ at $v$ in direction of $u$} is defined as $\lim\limits_{t\rightarrow 0} \dfrac{\|v+tu\| - \|v\|}{t}$. We say $\|\cdot\|$ is \emph{Gateaux differentiable} at $0\neq v$ if and only if for all $u\in V$, $\lim\limits_{t\rightarrow 0} \dfrac{\norm{v+tu}-\norm{v}}{t}$ exists. A concept related to the Gateaux derivative of norm function is the \emph{subdifferential set of norm function} (see \cite{hiriart}). The subdifferential set of  norm function at $v\in V$, denoted by $\partial\|v\|$,  is the set of bounded linear functionals $f \in V^*$ satisfying $\|u\|-\|v\|\geq \text{Re } f(u-v) \quad \text{for all } u\in V.$ It is easy to prove that for $0\neq u, v\in V$, $\partial \|v\|=\{f\in V^*: f(v)=\|v\|, \|f\| = 1\}$ and \begin{equation}\lim\limits_{t\rightarrow 0^+} \dfrac{\norm{v+tu} - \norm{v}}{t} = \max\{\text{Re }f(u) : f\in \partial \|v\|\}. \label{67}\end{equation}

There is a notion to define orthogonality in a normed space, called \emph{Birkhoff-James orthogonality} which is closely related to the Gateaux derivative of norm function (see \cite{James}).  We say $v$ is said to be Birkhoff-James orthogonal to $W$ if $\|v\|\leq \|v+w\|$ for all $w \in W$. Let $u, v\in V$, then $v$ is said to be Birkhoff-James orthogonal to $u$ if and only if $v$ is Birkhoff-James orthogonal to $\bF u$. In \cite{Keckic 2}, the concept of \emph{$\varphi$-Gateaux derivative} was introduced which also gives a characterization of Birkhoff-James orthogonality. The number $D_{\varphi, v}(u)= \lim\limits_{t\rightarrow 0^+}\dfrac{\norm{v+te^{\iota\varphi}u}-\norm{v}}{t}$ is called the $\varphi$-Gateaux derivative of $\|\cdot\|$ at $v$, in the $u$ and $\varphi$ directions. 

\begin{prop}\label{22}\cite[Proposition 1.5]{Keckic} The limit $D_{\varphi, v}(u)$ always exists for $u, v\in V$ and $\varphi\in [0, 2\pi)$. And $v$ is Birkhoff-James orthogonal to $u$ if and only if $\inf_{\varphi}D_{\varphi, v}(u) \geq 0$.
\end{prop}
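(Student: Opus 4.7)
The strategy is to reduce both claims to one-variable convex analysis applied to the scalar function $g(t) = \norm{v + te^{\iota\varphi}u}$, $t \in \bR$, for fixed $\varphi \in [0, 2\pi)$. The triangle inequality immediately gives convexity of $g$: for $\alpha \in [0,1]$ and $t_1, t_2 \in \bR$, $g(\alpha t_1 + (1-\alpha)t_2) \leq \alpha g(t_1) + (1-\alpha) g(t_2)$.

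For the existence of $D_{\varphi, v}(u)$, I would invoke the standard fact that for a convex function, the forward difference quotient $t \mapsto (g(t) - g(0))/t$ is non-decreasing on $(0, \infty)$, so its limit as $t \to 0^+$ exists in $[-\infty, \infty)$. The reverse triangle inequality gives the lower bound $g(t) \geq \norm{v} - t\norm{u}$, so the difference quotient is bounded below by $-\norm{u}$, and $D_{\varphi, v}(u)$ exists as a finite real number. This is exactly the right derivative $g'(0^+)$.

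For the Birkhoff--James characterization, the forward implication is direct: if $\norm{v} \leq \norm{v + \lambda u}$ for every $\lambda \in \bF$, then specializing to $\lambda = te^{\iota\varphi}$ with $t > 0$ shows each difference quotient is non-negative, hence $D_{\varphi, v}(u) \geq 0$ for every $\varphi$, and so $\inf_\varphi D_{\varphi, v}(u) \geq 0$. For the converse, assume $\inf_\varphi D_{\varphi, v}(u) \geq 0$ and fix $\lambda \in \bF$; write $\lambda = te^{\iota\varphi}$ with $t \geq 0$ (every $\lambda$ arises this way, with $\varphi \in \{0, \pi\}$ sufficing when $\bF = \bR$). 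Then the associated $g$ is convex with right derivative $g'(0^+) = D_{\varphi, v}(u) \geq 0$, and since convexity forces $s \mapsto g'(s^+)$ to be non-decreasing on $[0, \infty)$, it follows that $g'(s^+) \geq 0$ for all $s \geq 0$, so $g$ is non-decreasing on $[0, \infty)$. In particular $\norm{v + \lambda u} = g(t) \geq g(0) = \norm{v}$, proving $v$ is Birkhoff--James orthogonal to $u$.

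There is no genuinely hard step here; the argument is a neat application of scalar convex analysis. The only subtlety worth flagging is bookkeeping for the underlying field: one must verify that every $\lambda \in \bF$ is realized as $te^{\iota\varphi}$ for some $t \geq 0$ and $\varphi \in [0, 2\pi)$, which is immediate for $\bF = \bC$ and reduces to taking $\varphi \in \{0, \pi\}$ when $\bF = \bR$.
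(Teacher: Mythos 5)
Your proof is correct. The paper gives no argument of its own for this proposition --- it simply cites \cite[Proposition 1.5]{Keckic} --- and your convexity argument is exactly the standard one underlying that reference: $t\mapsto\norm{v+te^{\iota\varphi}u}$ is convex, so the right difference quotient is monotone and bounded below by $-\norm{u}$, giving existence, and the orthogonality equivalence follows by monotonicity of the difference quotient. One tiny simplification for the converse: you do not need to pass through monotonicity of $s\mapsto g'(s^+)$; since $(g(t)-g(0))/t$ is itself non-decreasing in $t>0$ and tends to $D_{\varphi,v}(u)\geq 0$ as $t\to 0^+$, it is non-negative for every $t>0$, which already gives $g(t)\geq g(0)$. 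Your field bookkeeping ($\varphi\in\{0,\pi\}$ when $\bF=\bR$) is the right thing to flag and is handled correctly.
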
 
 
In \cite{Keckic}, an expression for the $\varphi$-Gateaux derivative of $\mathscr B(\hc)$ norm was obtained and as an application, a proof of a known characterization of smooth points and Birkhoff-James orthogonality of an operator to another operator in $\mathscr B(\hc)$ was given. In this paper, an expression for $\varphi$-Gateaux derivative of the norm of $\A$ in terms of \emph{states} is obtained and various applications are discussed. Few definitions are in order. A \emph{positive element} in $\A$ is of the form $a^*a$ for some $a\in\A$. A \emph{positive functional} on $\A$ is a linear functional which takes positive elements of $\A$ to non-negative real numbers.  For $\bF=\bC$, a \emph{state} $\psi$ on $\A$ is positive functional of norm one. For $\bF=\bR$, an additional requirement for $\psi$ to be a state is that $\psi(a^*) = \psi(a)$ for all $a\in\A$.  Let $\mathcal S_{\A}$ be the set of states on $\A$. The main theorem of this paper is as below.

\begin{theorem}\label{51} Let $0\neq a, b\in\A$. Then $$\lim\limits_{t\rightarrow 0^+} \dfrac{\norm{a+t b} - \norm{a}}{t} = \dfrac{1}{\norm{a}}\max\{\text{Re }\psi(a^*b) : \psi\in\mathcal S_{\A}, \psi(a^*a) = \norm{a}^2\}.$$\end{theorem}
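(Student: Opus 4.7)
The plan is to reduce the Gateaux derivative to a derivative of a supremum, by passing to the squared norm via the $C^*$-identity. For any positive element $p\in\A$ one has $\|p\|=\sup\{\psi(p):\psi\in\mathcal S_\A\}$ (standard: extend by Hahn--Banach a character of $C^*(p)$ realising $\|p\|$ to a state of $\A$). Applying this to $p=(a+tb)^*(a+tb)$ and expanding gives, for every $t\in\bR$,
\begin{equation*}
\Phi(t):=\|a+tb\|^2=\sup_{\psi\in\mathcal S_\A}\bigl[\psi(a^*a)+2t\operatorname{Re}\psi(a^*b)+t^2\psi(b^*b)\bigr],
\end{equation*}
so $\Phi(0)=\|a\|^2$. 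Since $\frac{\|a+tb\|-\|a\|}{t}=\frac{\Phi(t)-\Phi(0)}{t(\|a+tb\|+\|a\|)}$ with denominator tending to $2\|a\|$ as $t\to 0^+$, it suffices to prove that the right derivative of $\Phi$ at $0$ equals $2\max\{\operatorname{Re}\psi(a^*b):\psi\in\Psi_0\}$, where $\Psi_0:=\{\psi\in\mathcal S_\A:\psi(a^*a)=\|a\|^2\}$.

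The set $\Psi_0$ is non-empty and weak*-compact: $\mathcal S_\A$ is weak*-compact by Banach--Alaoglu, the map $\psi\mapsto\psi(a^*a)$ is weak*-continuous, and $\Psi_0$ is its level set where the (attained) maximum $\|a\|^2$ is achieved. The lower bound is then immediate: for any $\psi\in\Psi_0$, $\Phi(t)\geq\psi(a^*a)+2t\operatorname{Re}\psi(a^*b)+t^2\psi(b^*b)$, whence $\liminf_{t\to 0^+}\frac{\Phi(t)-\Phi(0)}{t}\geq 2\operatorname{Re}\psi(a^*b)$.

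The main technical step is the reverse inequality, which again exploits weak*-compactness. For every $t>0$ let $\psi_t\in\mathcal S_\A$ attain the supremum defining $\Phi(t)$. Pick $t_n\to 0^+$ realising $\limsup_{t\to 0^+}\frac{\Phi(t)-\Phi(0)}{t}$, and pass to a subsequence so that $\psi_{t_n}\to\psi_\infty$ in the weak* topology. Since $\psi_{t_n}(a^*a)\leq\|a\|^2$, we get
\begin{equation*}
\frac{\Phi(t_n)-\|a\|^2}{t_n}\leq 2\operatorname{Re}\psi_{t_n}(a^*b)+t_n\|b\|^2\longrightarrow 2\operatorname{Re}\psi_\infty(a^*b).
\end{equation*}
Moreover norm continuity gives $\Phi(t_n)\to\|a\|^2$, and the formula for $\Phi(t_n)$ together with the uniform boundedness of the states forces $\psi_{t_n}(a^*a)\to\|a\|^2$; by weak*-continuity, $\psi_\infty(a^*a)=\|a\|^2$, i.e., $\psi_\infty\in\Psi_0$. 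Combining the two bounds proves the equality and shows the supremum is attained at $\psi_\infty$, which justifies writing $\max$ in the statement. The main obstacle is precisely this last point --- verifying that the weak*-limit of the maximisers $\psi_t$ still lies in $\Psi_0$ --- after which the rest is routine.
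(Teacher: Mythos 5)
Your argument is essentially correct but follows a genuinely different route from the paper's. The paper first proves a reduction lemma (Lemma \ref{23}) showing that the one-sided derivative of $t\mapsto\norm{a+tb}$ at $0$ equals $\frac{1}{\norm{a}}$ times the one-sided derivative of $t\mapsto\norm{a^*a+ta^*b}$ (via a triangle-inequality and adjoint-symmetry trick applied to $\norm{a^*a+t(a^*b+b^*a)}$); it then invokes the general subdifferential formula \eqref{67} at the positive element $a^*a$ and identifies $\partial\norm{a^*a}$ with the peaking states through a Jordan-decomposition argument (Lemma \ref{8989}). You instead write $\norm{a+tb}^2$ directly as a supremum of quadratics in $t$ over the state space and differentiate that supremum by hand, with a weak*-compactness argument replacing both the subdifferential formula and the Jordan decomposition. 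Your route is more self-contained (it never mentions subdifferentials or decompositions of functionals) and makes the attainment of the maximum transparent; the paper's route is shorter because it outsources the compactness to the standard fact \eqref{67}.

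One claim in your write-up is false as stated and needs repair: for a non-unital $C^*$-algebra (e.g.\ $\C_0(\Omega)$ or $\mathscr K(\hc)$, both of which matter for this paper) the state space $\mathcal S_{\A}$ is \emph{not} weak*-compact; its weak*-closure is the quasi-state space $Q=\{f\in\A^*: f\geq 0,\ \norm{f}\leq 1\}$ (in $\C_0(\bR)^*$ the point masses $\delta_n$ converge weak* to $0$). The fix is routine: run both compactness arguments inside $Q$, which \emph{is} weak*-compact by Banach--Alaoglu together with the weak*-closedness of the positive cone. Then $\Psi_0=\{f\in Q: f(a^*a)=\norm{a}^2\}$ consists automatically of states, since $f(a^*a)=\norm{a^*a}$ forces $\norm{f}=1$; and your limit point $\psi_\infty$ is a priori only in $Q$, but your own computation $\psi_\infty(a^*a)=\lim\psi_{t_n}(a^*a)=\norm{a}^2$ again forces $\norm{\psi_\infty}=1$, so $\psi_\infty\in\Psi_0$ as desired (in the real case the symmetry condition $\psi(a^*)=\psi(a)$ also passes to weak* limits). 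A minor further point: when $\A$ is non-separable the unit ball of $\A^*$ need not be weak*-metrizable, so ``pass to a subsequence'' should be ``pass to a subnet''; nothing else in the argument changes.
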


It is well known that if $\mathcal I$ is a two-sided closed ideal of $\A$, then any $\psi\in\mathcal S_{\mathcal I}$ has a unique extension $\tilde{\psi}\in\mathcal S_{\A}$. We will be using the same notation $\psi$ for the extension $\tilde{\psi}$ throughout the article. As an application of the above theorem, we get the following corollary.

\begin{cor}\label{68} Let $\mathcal I$ be a two-sided closed ideal of $\A$. Let $0\neq a\in\A$ such that $\dist(a, \mathcal I)< \norm{a}$. Then for any $0\neq b\in\A$, we have $$\lim\limits_{t\rightarrow 0^+} \dfrac{\norm{a+t b} - \norm{a}}{t} = \dfrac{1}{\norm{a}}\max\{\text{Re }\psi(a^*b) : \psi\in\mathcal S_{\mathcal I}, \norm{\psi} = 1, \psi(a^*a) = \norm{a}^2\}.$$
\end{cor}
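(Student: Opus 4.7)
The plan is to deduce the corollary from Theorem \ref{51} by showing that, under the hypothesis $\dist(a,\mathcal I)<\|a\|$, every state $\psi\in\mathcal S_\A$ satisfying $\psi(a^*a)=\|a\|^2$ is automatically the unique extension of a state on $\mathcal I$. Once this is proved, the index set of the maximum in Theorem \ref{51} is in bijection with $\{\psi\in\mathcal S_\mathcal I : \psi(a^*a)=\|a\|^2\}$ via the unique-extension correspondence, and the two expressions coincide term by term.

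The main input I would use is a structural decomposition of states relative to an ideal. For any $\psi\in\mathcal S_\A$, let $\phi=\psi|_\mathcal I$ and $t=\|\phi\|\in[0,1]$. One then has
$$\psi = t\,\tilde\phi + (1-t)\,\eta,$$
where $\tilde\phi$ is the state on $\A$ extending $\phi/t$ (for $t>0$) and $\eta$ is a state on $\A$ vanishing on $\mathcal I$, so that $\eta$ descends to a state on the quotient $\A/\mathcal I$. This decomposition is standard in $C^*$-algebra theory --- it corresponds to the central projection of $\A^{**}$ determined by $\mathcal I$ --- and can alternatively be produced by hand using an approximate identity of $\mathcal I$ together with a weak-$*$ limit point argument.

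Applying this decomposition to $a^*a$ and using that the $C^*$-quotient norm in $\A/\mathcal I$ satisfies $\|\overline{a^*a}\| = \|\overline a\|^2 = \dist(a,\mathcal I)^2$, I obtain $\eta(a^*a)\leq\dist(a,\mathcal I)^2$; trivially also $\tilde\phi(a^*a)\leq\|a\|^2$. Combining,
$$\|a\|^2 = \psi(a^*a) \leq t\,\|a\|^2 + (1-t)\,\dist(a,\mathcal I)^2,$$
and since $\dist(a,\mathcal I) < \|a\|$ the only way this can hold is $t=1$. Hence $\psi=\tilde\phi$ is the extension of $\psi|_\mathcal I\in\mathcal S_\mathcal I$, and Theorem \ref{51} then gives the claimed formula.

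The only delicate point is establishing the state decomposition cleanly; once it is in place the rest is a routine quotient-norm inequality. I also note that $\{\psi\in\mathcal S_\A : \psi(a^*a)=\|a\|^2\}$ is nonempty and weak-$*$ compact --- produced by lifting the norm-one functional on $\bF a$ sending $a$ to $\|a\|$ via Hahn--Banach to a state, and closed as the intersection of $\mathcal S_\A$ with a continuous level set --- so the maximum in the formula is actually attained, on a state lying in the restricted index set.
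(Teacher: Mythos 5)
Your argument is correct, and it takes a genuinely different route from the paper. The paper proves the same key fact --- that every norm-one functional $f$ with $f(a^*a)=\norm{a}^2$ satisfies $\norm{f|_{\mathcal I}}=1$ --- by Banach-space methods: it invokes the $M$-ideal decomposition $\A^*=\mathcal I^\#\oplus_1\mathcal I^{\perp}$, applies Krein--Milman to the weak*-compact face $K=\{f:\norm{f}=1,\ f(a^*a)=\norm{a}^2\}$, and rules out extreme points in $\mathcal I^{\perp}$ via $f(a^*a)=f(a^*a-b)\leq\norm{a^*a-b}<\norm{a}^2$ for suitable $b\in\mathcal I$ (imitating W\'ojcik's lemma). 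You instead use the purely $C^*$-algebraic decomposition $\psi=t\tilde\phi+(1-t)\eta$ coming from the central support projection of $\mathcal I$ in $\A^{**}$, together with the quotient identity $\dist(a^*a,\mathcal I)=\norm{\overline a}^2=\dist(a,\mathcal I)^2$. Your version avoids the $M$-ideal and extreme-point machinery entirely and yields the sharper quantitative inequality $\psi(a^*a)\leq t\norm{a}^2+(1-t)\dist(a,\mathcal I)^2$, which makes transparent exactly where the hypothesis $\dist(a,\mathcal I)<\norm{a}$ is used; the paper's version has the virtue of working at the level of general $M$-ideals and not requiring the bidual/approximate-identity construction. (The $t=0$ case is harmless: then $\psi=\eta$ and $\psi(a^*a)\leq\dist(a,\mathcal I)^2<\norm{a}^2$ directly.) One small slip in your closing remark: to exhibit a state with $\psi(a^*a)=\norm{a}^2$ you should Hahn--Banach-extend the functional on $\bF(a^*a)$ sending $a^*a\mapsto\norm{a^*a}$ and then invoke Lemma \ref{8989}; extending $a\mapsto\norm{a}$ from $\bF a$ gives a norm-one functional norming $a$, which need not be a state nor satisfy $f(a^*a)=\norm{a}^2$. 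This does not affect the proof, since attainment of the maximum is already part of Theorem \ref{51}.
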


Another concept which is closely related to the Gateaux derivative of norm function and Birkhoff-James orthogonality is that of the  \emph{smooth points} of the unit ball in a normed space. We say that a vector $v$ of norm one is a smooth point of the unit ball of $V$ if there exists a unique functional $F_v$, called the support functional, such that $\norm{F_v} = 1$ and $F_v(v) = 1$. It is a general fact that $v$ is a smooth point of the unit ball of $v$ if and only if the norm is Gateaux differentiable at $v$. And in that case, $\lim\limits_{t\rightarrow 0} \dfrac{\norm{v+tu}-\norm{v}}{t} = \text{ Re }F_v(u)$. Further, $v$ is Birkhoff-James orthogonal to $u$ if and only if $F_v(u) = 0$ (see \cite[Theorem 2.1]{Abatzoglou} and \cite[Proposition 1.3]{Keckic}). Along the lines of proof of Corollary 2.2 of \cite{Keckic 1}, we get that the smooth points of the unit ball of $\C_0(\Omega)$ are those $f\in\C_0(\Omega)$ such that $\norm{f}_{\infty} = 1$  such that $|f|$ is equal to $1$ at exactly one point. For a characterization of smooth points of  the unit ball of $C_b(\Omega)$ for $\Omega$ a normal space, see \cite[Corollary 3.2]{Keckic 1}. 

In Section \ref{proofs}, we give the proof of Theorem \ref{51} and Corollary \ref{68}. In Section \ref{applications}, we obtain different proofs for several known results and their generalizations. In Corollary \ref{52}, a proof for a characterization of Birkhoff-James orthogonality of an element to a subspace in a $C^*$-algebra is obtained. In Corollary \ref{71}, we obtain an expression for Gateaux derivative of norm function of $\mathscr B(\hc)$ at $A$ in direction of $B$ when $\dist(A, \mathscr K(\hc))< \norm{A}$. Using Corollary \ref{71}, an alternate proof for a known characterization of smooth points of the unit ball of $\mathscr B(\hc)$ is given in Corollary \ref{78}. In Corollary \ref{999}, we obtain the subdifferential set $\partial \|A\|$, when $\dist(A, \mathscr K(\hc))< \norm{A}$. 

\section{Proofs}\label{proofs}

To prove theorem \ref{51}, we need the following lemma.

\begin{lemma}\label{23} Let $a,b\in\A$. Then 
\begin{equation*}\lim\limits_{t\rightarrow 0^+} \dfrac{\norm{a+t b} - \norm{a}}{t} = \dfrac{1}{\norm{a}}\lim\limits_{t\rightarrow 0^+} \dfrac{\norm{a^*a+t a^*b} - \norm{a^*a}}{t}.\end{equation*}
\end{lemma}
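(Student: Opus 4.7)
The plan is to exploit the C*-identity $\norm{x}^2 = \norm{x^*x}$, applied with $x = a+tb$, to reduce a statement about the norm of $a+tb$ to a statement about the norm of the self-adjoint element $(a+tb)^*(a+tb) = a^*a + t(a^*b + b^*a) + t^2 b^*b$. Combining the algebraic factorization $\norm{a+tb}^2 - \norm{a}^2 = (\norm{a+tb} - \norm{a})(\norm{a+tb} + \norm{a})$ with $\norm{a+tb} + \norm{a} \to 2\norm{a}$ as $t \to 0^+$ (continuity of the norm), and absorbing the $t^2 b^*b$ term into an $O(t)$ error via the triangle inequality, the left-hand side of the lemma becomes
$$\frac{1}{2\norm{a}}\lim_{t\to 0^+}\frac{\norm{a^*a + t(a^*b + b^*a)} - \norm{a^*a}}{t} = \frac{1}{2\norm{a}}\, D_{a^*a}(a^*b + b^*a),$$
where I abbreviate $D_v(u) := \lim_{t\to 0^+}(\norm{v + tu} - \norm{v})/t$. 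So the lemma reduces to the identity $D_{a^*a}(a^*b + b^*a) = 2\, D_{a^*a}(a^*b)$.

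To establish this identity, I invoke the subdifferential formula (\ref{67}): $D_v(u) = \max\{\text{Re}\,f(u) : f \in \partial\norm{v}\}$. The crucial observation is that for a positive element $v = a^*a$, every $f \in \partial \norm{v}$ is in fact a state on $\A$. Granting this, any state $f$ satisfies $f(c^*) = \overline{f(c)}$, whence $\text{Re}\,f(a^*b + b^*a) = 2\,\text{Re}\,f(a^*b)$; taking the maximum over $f \in \partial \norm{v}$ then produces the factor of two.

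The main obstacle is therefore showing $\partial \norm{v} \subseteq \mathcal{S}_\A$ for positive $v$. I plan to pass to the unitization $\tilde\A$ (extending $f$ by Hahn--Banach to a norm-one functional with the same value at $v$) and restrict to the commutative C*-subalgebra $C^*(1_{\tilde\A}, v) \cong C(\sigma(v))$. By the Riesz representation theorem, this restriction is integration against a finite complex Borel measure $\mu$ on $\sigma(v) \subseteq [0, \norm{v}]$ of total variation at most $1$. The condition $\int \lambda\, d\mu(\lambda) = \norm{v}$, together with $|\lambda| \leq \norm{v}$ on the support, forces $\mu$ to be a point mass of weight $1$ at $\norm{v}$; in particular $f(1_{\tilde\A}) = 1$. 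The classical fact that a norm-one functional on a unital C*-algebra taking value $1$ at the identity is a state then finishes the proof (a routine self-adjointness check handles the real-scalar case).
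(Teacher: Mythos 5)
Your proposal is correct, but after the common first step it diverges from the paper's argument. Both proofs use the $C^*$-identity and the factorization $\norm{a+tb}^2-\norm{a}^2=(\norm{a+tb}-\norm{a})(\norm{a+tb}+\norm{a})$ to reduce the left-hand side to $\tfrac{1}{2\norm{a}}D_{a^*a}(a^*b+b^*a)$. From there the paper stays entirely elementary: it proves the two inequalities separately, getting ``$\geq$'' from submultiplicativity ($\norm{a^*(a+tb)}\leq\norm{a^*}\,\norm{a+tb}$) and ``$\leq$'' from the triangle inequality $\norm{a^*a+t(a^*b+b^*a)}\leq\tfrac12\norm{a^*a+2ta^*b}+\tfrac12\norm{a^*a+2tb^*a}$ together with $\norm{x}=\norm{x^*}$, so no linear functionals appear at all. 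You instead prove the exact identity $D_{a^*a}(a^*b+b^*a)=2D_{a^*a}(a^*b)$ via the subdifferential formula \eqref{67} and the fact that every norming functional of a positive element is a state; this is precisely the paper's Lemma \ref{8989}, which the paper proves by the Jordan decomposition (Blackadar II.6.3.4) and defers until the proof of Theorem \ref{51}. Your route is valid and in fact yields equality in one stroke (so you do not need the separate easy inequality), and your proof of the state property --- restricting to $C^*(1_{\tilde\A},a^*a)\cong C(\sigma(a^*a))$ and forcing the representing measure to be $\delta_{\norm{a^*a}}$ --- is a legitimate and more hands-on alternative to the Jordan-decomposition citation. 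The trade-off is that you import the entire state machinery into a lemma the paper keeps functional-free, which slightly blurs the paper's division of labor but creates no circularity. Two small points to tighten: the statement implicitly requires $a\neq 0$ (both sides involve $1/\norm{a}$), and your closing remark that ``a routine self-adjointness check handles the real-scalar case'' should be spelled out, since for $\bF=\bR$ the condition $f(a^*)=f(a)$ is part of the definition of a state and does not follow from positivity alone.
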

\begin{proof} We have $$\dfrac{1}{\norm{a}}\lim\limits_{t\rightarrow 0^+} \dfrac{\norm{a^*a+t a^*b} - \norm{a^*a}}{t}\leq\dfrac{1}{\norm{a}}\lim\limits_{t\rightarrow 0^+} \dfrac{\norm{a^*}(\norm{a+t b} - \norm{a})}{t} = \lim\limits_{t\rightarrow 0^+} \dfrac{\norm{a+t b} - \norm{a}}{t}.$$ Now we need to prove $\lim\limits_{t\rightarrow 0^+} \dfrac{\norm{a+t b} - \norm{a}}{t} \leq \dfrac{1}{\norm{a}}\lim\limits_{t\rightarrow 0^+} \dfrac{\norm{a^*a+t a^*b} - \norm{a^*a}}{t}.$\begin{eqnarray*}\lim\limits_{t\rightarrow 0^+} \dfrac{\norm{a+t b} - \norm{a}}{t} &=&  \lim\limits_{t\rightarrow 0^+} \dfrac{\norm{a+t b}^2 - \norm{a}^2}{t(\norm{a+t b} + \norm{a})}\\
&=&\dfrac{1}{2\norm{a}}\lim\limits_{t\rightarrow 0^+} \dfrac{\norm{a^*a+t(a^*b+b^*a) + t^2b^*b} - \norm{a^*a}}{t}\\
&=&\dfrac{1}{2\norm{a}}\lim\limits_{t\rightarrow 0^+} \dfrac{\norm{a^*a+t(a^*b+b^*a)} - \norm{a^*a}}{t}\\
&\leq&\dfrac{1}{2\norm{a}}\lim\limits_{t\rightarrow 0^+} \dfrac{\norm{a^*a+2ta^*b} + \norm{a^*a+2tb^*a)} - 2\norm{a^*a}}{2t}\\
&=&\dfrac{1}{\norm{a}}\lim\limits_{t\rightarrow 0^+} \dfrac{\norm{a^*a+2ta^*b} - \norm{a^*a}}{2t}\\
&=&\dfrac{1}{\norm{a}}\lim\limits_{t\rightarrow 0^+} \dfrac{\norm{a^*a+t a^*b} - \norm{a^*a}}{t}.
\end{eqnarray*}\end{proof}

\begin{lemma}\label{8989} Let $f\in\A^*$ of norm one such that $f(a^*a) = \norm{a^*a}$ for some $a\neq 0$. Then $f\in \mathcal S(\A)$.
\end{lemma}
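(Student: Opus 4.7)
The plan is to deduce positivity of $f$ from the constraints $\norm{f}=1$ and $f(a^*a)=\norm{a^*a}$ by combining a Hermitian decomposition of $f$ with the behavior of $f$ on an approximate identity. Setting $h := a^*a/\norm{a^*a}\in\A$, the hypotheses become $h \geq 0$, $\norm{h}=1$, and $f(h)=1=\norm{f}$, and it suffices to show that such an $f$ is positive (then a state, since $\norm{f}=1$).

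First I would form the Hermitian part $f_R(x) := \tfrac{1}{2}(f(x)+\overline{f(x^*)})$, replacing the conjugation by the identity in the real case. Then $f_R$ is Hermitian, $f_R(h)=\text{Re }f(h)=1$, and $\norm{f_R}\leq\norm{f}=1$, so $\norm{f_R}=1$. Applying the Jordan--Grothendieck decomposition $f_R = f_R^+ - f_R^-$ with $f_R^\pm\geq 0$ and $\norm{f_R}=\norm{f_R^+}+\norm{f_R^-}$, the chain
\[ 1 = f_R^+(h)-f_R^-(h) \leq f_R^+(h) \leq \norm{f_R^+} \leq \norm{f_R^+}+\norm{f_R^-} = 1 \]
must collapse into equalities, forcing $\norm{f_R^-}=0$ and so $f_R$ itself a state.

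Next I would fix an approximate identity $(e_\lambda)$ of $\A$. Since $f_R$ is a state, $f_R(e_\lambda)\to\norm{f_R}=1$, and as each $e_\lambda$ is self-adjoint this says $\text{Re }f(e_\lambda)\to 1$. Coupled with $|f(e_\lambda)|\leq\norm{f}=1$, this forces $\text{Im }f(e_\lambda)\to 0$, hence $f(e_\lambda)\to 1 = \norm{f}$. I would then invoke the classical characterization that $f\in\A^*$ is positive if and only if $\norm{f}=\lim_\lambda f(e_\lambda)$ for some approximate identity, concluding that $f$ is positive and hence a state. In the real case, the Hermitian condition $f(a^*)=f(a)$ would follow by extending the resulting positive functional to the unitization $\tilde\A$ (where $\norm{\tilde f}=\tilde f(1)$) and running a short perturbation argument on anti-self-adjoint elements.

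The main obstacle is the appeal to the Jordan--Grothendieck decomposition of a Hermitian functional on a $C^*$-algebra, a classical but non-elementary result. A slicker alternative bypassing it uses Sakai's polar decomposition $f=v\cdot\phi$ on $\A^{**}$, with $\phi$ a normal state and $v$ a partial isometry satisfying $v^*v = s(\phi)$: Cauchy--Schwarz in the form $|\phi(v^*h)|^2\leq \phi(v^*v)\phi(h^2)$ is forced into equality, which in the GNS of $\phi$ gives $\pi_\phi(v)\xi_\phi=\pi_\phi(h)\xi_\phi=\xi_\phi$, whence $f(x)=\phi(x)$ for every $x\in\A$, identifying $f$ directly with a state.
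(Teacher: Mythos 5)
Your argument is correct, and it rests on the same two pillars as the paper's proof: a Jordan--Grothendieck decomposition into positive parts with additive norms, and the characterization of positive functionals as those attaining their norm at the unit (respectively, along an approximate identity). The paper passes to the unitization $\A^+$, decomposes the whole extension $\tilde f=(f_1-f_2)+\iota(f_3-f_4)$, squeezes out $f_2=0$ from $\tilde f(a^*a)=\norm{a^*a}$, and concludes $\tilde f(1_{\A})=1$, so that $\tilde f$ is positive by the ``$\phi(1)=\norm{\phi}$'' criterion. You instead stay inside $\A$: you decompose only the Hermitian part $f_R$, kill its negative part by the same norm-collapsing chain, and then recover positivity of $f$ itself from $\text{Re }f(e_\lambda)\to 1$ together with $|f(e_\lambda)|\leq 1$, invoking the approximate-identity criterion. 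This buys a cleaner treatment of the imaginary part: the paper's step from $f_1(1_{\A})=1$ to $\tilde f(1_{\A})=1$ tacitly needs $f_3(1_{\A})=f_4(1_{\A})$, which is exactly the squeeze $|f(e_\lambda)|\leq 1$, $\text{Re }f(e_\lambda)\to 1$ that you carry out explicitly; the cost is quoting the slightly less elementary non-unital positivity criterion. Your closing remark on the real case (forcing $f$ to vanish on anti-self-adjoint elements by a second-order perturbation of the unit) addresses a point that the paper's proof, which uses $\iota$, passes over in silence. The Sakai polar-decomposition alternative you sketch is also viable but is much heavier machinery than the lemma requires.
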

\begin{proof}  We extend $f$ to a functional $\tilde{f}$ of norm one on $\A^+$, where $\A^+$ is unitization of $\A$. Using Theorem II.6.3.4 of \cite{blackadar}, we have $\tilde{f} = (f_1 -f_2) +\iota (f_3-f_4)$ where $f_i$ is positive linear functional with $\norm{f_1-f_2} = \norm{f_1}+\norm{f_2}$ and $\norm{f_3-f_4} = \norm{f_3}+ \norm{f_4}$. Now $\tilde{f}(a^*a) = \norm{a^*a}$ implies $f_1(a^*a) = \norm{a^*a}$ and $\norm{f_1} =1$. This gives $f_1(1_{\A}) = 1$ and thus $\tilde{f}(1_{\A}) =1$. And using Theorem II.6.2.5(ii), $\tilde{f}$ is a positive functional, hence $f$ is a positive functional and thus a state.
\end{proof}

{\it Proof of Theorem \ref{51}} We have $\partial\norm{a^*a} = \{f\in\A^* : \norm{f} = 1, f(a^*a) = \norm{a^*a}\}$. Now any linear functional $f$ of norm one such that $f(a^*a) = \norm{a^*a}$ for some $a\neq 0$ is a state by Lemma \ref{8989}.
Hence we get $\partial\norm{a^*a} = \{\psi\in\mathcal S_{\A} :  \psi(a^*a) = \norm{a^*a}\}$. Now using Lemma \ref{23} and equation \eqref{67}, we get \begin{eqnarray*}\lim\limits_{t\rightarrow 0^+} \dfrac{\norm{a+t b} - \norm{a}}{t} &=& \dfrac{1}{\norm{a}}\lim\limits_{t\rightarrow 0^+} \dfrac{\norm{a^*a+t a^*b} - \norm{a^*a}}{t} = \dfrac{1}{\norm{a}}\max\{\text{Re }\psi(a^*b) : \psi\in \partial \|a^*a\| \}\\
&=&  \dfrac{1}{\norm{a}}\max\{\text{Re }\psi(a^*b) : \psi\in\mathcal S_{\A}, \psi(a^*a) = \norm{a^*a}\}.\end{eqnarray*} \qed

Before proving Corollary \ref{68}, we note that Theorem 3.1 of \cite{2019} follows as a corollary of Lemma \ref{23} and Proposition \ref{22}.

\begin{cor}\label{88}\cite[Theorem 3.1]{2019} Let $a\in\A$. Let $\B$ be a subspace of  $\A$. Then $a$ is Birkhoff-James orthogonal to $\B$ if and only if $a^*a$ is Birkhoff-James orthogonal to $a^*\B$.
\end{cor}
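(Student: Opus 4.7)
The plan is to reduce the statement to the direction-by-direction characterisation of Birkhoff--James orthogonality provided by Proposition \ref{22} and then transport between the two sides of the equivalence using Lemma \ref{23}. The case $a=0$ is vacuous on both sides, so I will assume $a\neq 0$ throughout. Since $\B$ is a subspace, $a$ is Birkhoff--James orthogonal to $\B$ if and only if $a$ is Birkhoff--James orthogonal to each $b\in\B$ (apply the defining inequality to $\lambda b\in\B$ for every $\lambda\in\bF$). Likewise $a^*a$ is Birkhoff--James orthogonal to $a^*\B$ iff $a^*a\perp_{BJ} a^*b$ for every $b\in\B$. So it suffices to prove the one-element version:
\begin{equation*}
a\perp_{BJ} b \iff a^*a\perp_{BJ} a^*b \qquad \text{for every } b\in\A.
\end{equation*}

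By Proposition \ref{22}, the left-hand side is equivalent to $\inf_{\varphi}D_{\varphi,a}(b)\geq 0$ and the right-hand side to $\inf_{\varphi}D_{\varphi,a^*a}(a^*b)\geq 0$. The key step is to identify these two families of directional derivatives. Applying Lemma \ref{23} with $b$ replaced by $e^{\iota\varphi}b$ (which is legal since Lemma \ref{23} has no restriction on $b\in\A$) and using $a^*(e^{\iota\varphi}b)=e^{\iota\varphi}(a^*b)$, one gets
\begin{equation*}
D_{\varphi,a}(b) \;=\; \lim_{t\to 0^+}\frac{\norm{a+te^{\iota\varphi}b}-\norm{a}}{t}
\;=\;\frac{1}{\norm{a}}\lim_{t\to 0^+}\frac{\norm{a^*a+te^{\iota\varphi}a^*b}-\norm{a^*a}}{t}
\;=\;\frac{1}{\norm{a}}D_{\varphi,a^*a}(a^*b).
\end{equation*}
Since $\norm{a}>0$, this equality implies $D_{\varphi,a}(b)\geq 0$ iff $D_{\varphi,a^*a}(a^*b)\geq 0$ for each fixed $\varphi$, and therefore $\inf_{\varphi}D_{\varphi,a}(b)\geq 0$ iff $\inf_{\varphi}D_{\varphi,a^*a}(a^*b)\geq 0$. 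Combining with Proposition \ref{22} gives the desired equivalence.

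There is no real obstacle here; the content of the corollary is essentially already packaged inside Lemma \ref{23}, and the only care required is in the (routine) bookkeeping with the phase $e^{\iota\varphi}$ so as to convert a statement about one-sided derivatives from direction $0$ to arbitrary direction $\varphi$, which is exactly what is needed to invoke the Birkhoff--James criterion of Proposition \ref{22}. The subspace hypothesis on $\B$ is used only in the trivial reduction to single vectors $b\in\B$.
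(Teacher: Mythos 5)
Your proof is correct and follows exactly the route the paper indicates: the paper does not write out a proof but simply remarks that Corollary \ref{88} ``follows as a corollary of Lemma \ref{23} and Proposition \ref{22},'' and your argument (applying Lemma \ref{23} in the direction $e^{\iota\varphi}b$ to identify $D_{\varphi,a}(b)$ with $\frac{1}{\norm{a}}D_{\varphi,a^*a}(a^*b)$, then invoking Proposition \ref{22}) is precisely the intended filling-in of that remark. The only quibble is cosmetic: the case $a=0$ is not vacuous but trivially true on both sides.
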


We will denote the closed convex hull of a set $A$ by $\conv(A)$. And for a convex set $C$, $\ext(C)$ will denote the set of extreme points of $C$. We now prove Corollary \ref{68} by imitating the proof of Lemma 3.1 of \cite{Wojcik}.

{\it Proof of Corollary \ref{68}}  We prove that if $\psi\in \A^*$ such that $\psi(a^*a) = \norm{a}^2$, then $\norm{\psi|_{\mathcal I}} = \norm{\psi} = 1$. The result then follows from Theorem \ref{51}. Now any two-sided ideal is an $M$-ideal in $\A$, by Theorem V.4.4 of \cite{Ideal}. So $\A^* = \mathcal I^\#\oplus_1\mathcal I^{\perp}$, by Proposition I.1.12 of \cite{Ideal}, where $\mathcal I^\# = \{f\in \A^* : \norm{f} = \norm{f|_{\mathcal I}}\}$. Now let $K = \{f\in \A^* : \norm{f} =1, f(a^*a) = \norm{a}^2\}$. Then $K$ is a weak*-compact convex subset of $\A^*$. By Krein-Milman theorem, $K = \conv(\ext(K))$. Since $K$ is a face of $\A^*$, $\ext(K)\subseteq \ext(\mathcal I^\#\oplus_1\mathcal I^{\perp}) = \ext(\mathcal I^\#)\cup\ext(\mathcal I^{\perp})$. If we prove $\ext(K)\cap\mathcal I^{\perp} = \emptyset$, then $\ext(K)\subseteq \ext(\mathcal I^\#)$, and hence $K\subseteq \mathcal I^\#$. Let $f\in\mathcal I^{\perp}$ and $\norm{f} = 1$. We show $f\notin K$ by proving $f(a^*a)\neq \norm{a}^2$. As an application of Corollary \ref{88}, we get that $\dist(a, \mathcal I)< \norm{a}$ implies $\dist(a^*a, \mathcal I)< \norm{a}^2$. Now $\dist(a^*a, \mathcal I)< \norm{a}^2$ implies there exists $b\in\mathcal I$ such that $\norm{a^*a-b} < \norm{a}^2$. Now $f(a^*a) = f(a^*a-b) \leq\norm{a^*a-b} < \norm{a}^2$.\qed

\section{Applications}\label{applications}

Using the Hahn-Banach theorem with Corollary \ref{88}, we get the following generalization of Theorem 3.1 of \cite{2019}.

\begin{cor}\label{52} Let $\mathcal I$ be a two-sided closed ideal of $\A$. Let $a\in \mathcal \A$ such that $\dist(a, \mathcal I)< \norm{a}$. Let $\B$ be a subspace of $\A$. Then $a$ is Birkhoff-James orthogonal to $\B$ if and only if there exists $\psi\in \mathcal S_{\mathcal I}$ such that $\psi(a^*a) = \norm{a}^2$ with $\psi(a^*b) = 0$ for all $b\in \mathcal I$.\end{cor}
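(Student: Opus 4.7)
The plan is to combine the reduction from Corollary \ref{88} with a Hahn-Banach extension, and then to descend from $\A$ to $\mathcal I$ by reusing the $M$-ideal argument from the proof of Corollary \ref{68}.

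For the sufficiency direction, I would assume $\psi\in\mathcal S_{\mathcal I}$ satisfies $\psi(a^*a)=\norm{a}^2$ and $\psi(a^*b)=0$ for all $b\in\B$. Identifying $\psi$ with its unique extension in $\mathcal S_{\A}$, which has norm one, I would simply estimate, for every $b\in\B$,
\[\norm{a}^{2}=\psi\bigl(a^{*}(a+b)\bigr)\le\norm{a^{*}(a+b)}\le\norm{a}\,\norm{a+b},\]
which, after dividing by $\norm{a}$, says that $a$ is Birkhoff-James orthogonal to $\B$.

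For the necessity direction, I would first invoke Corollary \ref{88}: if $a$ is Birkhoff-James orthogonal to $\B$, then $a^{*}a$ is Birkhoff-James orthogonal to the subspace $a^{*}\B:=\{a^{*}b:b\in\B\}$. A quick check rules out $a^{*}a\in a^{*}\B$: otherwise $a^{*}a=a^{*}b_{0}$ for some $b_{0}\in\B$, and the BJ inequality applied to $-b_{0}\in\B$ would force $\norm{a}^{2}\le 0$, contradicting $a\ne 0$. Hence I may define a well-posed linear functional $\ell$ on $\bF\,a^{*}a+a^{*}\B$ by $\ell(\lambda a^{*}a+a^{*}b)=\lambda\norm{a}^{2}$; using BJ orthogonality, $|\ell(\lambda a^{*}a+a^{*}b)|=|\lambda|\norm{a^{*}a}\le\norm{\lambda a^{*}a+a^{*}b}$, so $\norm{\ell}=1$. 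Extending $\ell$ by Hahn-Banach to $f\in\A^{*}$ of norm one, I would obtain $f(a^{*}a)=\norm{a}^{2}$ and $f(a^{*}b)=0$ for all $b\in\B$, and Lemma \ref{8989} then promotes $f$ to a state of $\A$.

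It only remains to replace the state $f\in\mathcal S_{\A}$ by a state $\psi\in\mathcal S_{\mathcal I}$. For this I would reuse the key observation from the proof of Corollary \ref{68}: under the hypothesis $\dist(a,\mathcal I)<\norm{a}$ (which forces $\dist(a^{*}a,\mathcal I)<\norm{a}^{2}$), the set of norm-one functionals $g\in\A^{*}$ with $g(a^{*}a)=\norm{a}^{2}$ is contained in $\mathcal I^{\#}$, so $\norm{f|_{\mathcal I}}=1$. Thus $\psi:=f|_{\mathcal I}$ lies in $\mathcal S_{\mathcal I}$ and, by uniqueness of state extensions, lifts back to $f$; the required conditions $\psi(a^{*}a)=\norm{a}^{2}$ and $\psi(a^{*}b)=0$ then transfer. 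The main obstacle is precisely this descent step: the Hahn-Banach construction itself is routine, but obtaining a state of $\mathcal I$ rather than merely of $\A$ is exactly where the hypothesis $\dist(a,\mathcal I)<\norm{a}$ and the $M$-ideal structure of $\mathcal I$ are essential.
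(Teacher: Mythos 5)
Your proof is correct and follows exactly the route the paper indicates (Hahn--Banach applied after the reduction of Corollary \ref{88}, with the $M$-ideal descent from the proof of Corollary \ref{68} supplying a state on $\mathcal I$ rather than merely on $\A$); the paper only sketches this in one line, and your write-up fills in precisely the intended details, including the well-posedness check $a^*a\notin a^*\B$ and the norm-one extension promoted to a state via Lemma \ref{8989}. Note also that you have, correctly, read the condition ``$\psi(a^*b)=0$ for all $b\in\mathcal I$'' in the statement as ``for all $b\in\B$'', which is evidently the intended assertion.
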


Using the Riesz Representation Theorem with Corollary \ref{52}, we get the following  generalization of Corollary 3.2 of \cite{2019}.

\begin{cor}  Let $f\in\C_b(\Omega)$ such that $\dist(f, \C_0(\Omega)) < \norm{f}_{\infty}$. Let $\B$ be a subspace of $\C_b(\Omega)$. Then $f$ is Birkhoff-James orthogonal to $\B$  if and only if there exists a regular Borel probability measure $\mu$ on $\Omega$ such that {support of $\mu$ is contained in $\{x\in \Omega: |f(x)| = \norm{f}_\infty\}$} and $\int\limits_X \overline{f}h d\mu = 0 \text{ for all }h\in\B.$ If $|f|$ attains its norm at only one point $x_0$, then $f$ is orthogonal to $\B$ if and only if $h(x_0) = 0$ for all $h\in\B$.\end{cor}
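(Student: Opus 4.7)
The plan is to obtain this corollary as a direct translation of Corollary \ref{52} into the commutative setting via the Riesz Representation Theorem. Here $\A = C_b(\Omega)$ and the natural two-sided closed ideal is $\mathcal{I} = C_0(\Omega)$; the hypothesis $\dist(f, C_0(\Omega)) < \|f\|_\infty$ is exactly what is needed to invoke Corollary \ref{52}. Applying that corollary (with $a = f$, adjoint $a^* = \overline{f}$, and product $a^*a = |f|^2$), we get that $f$ is Birkhoff-James orthogonal to $\B$ if and only if there exists $\psi \in \mathcal{S}_{C_0(\Omega)}$ with $\psi(|f|^2) = \|f\|_\infty^2$ and $\psi(\overline{f} h) = 0$ for all $h \in \B$, where we identify $\psi$ with its unique state extension to $C_b(\Omega)$.

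The next step is to realize $\psi$ concretely. Since every state on $C_0(\Omega)$ is of norm one and positive, the Riesz Representation Theorem produces a unique regular Borel probability measure $\mu$ on $\Omega$ such that $\psi(g) = \int_\Omega g\, d\mu$ for every $g \in C_0(\Omega)$. Using the canonical extension of $\psi$ to $C_b(\Omega)$ (which by uniqueness must coincide with integration against $\mu$ on bounded continuous functions), the condition $\psi(\overline{f} h) = 0$ becomes $\int_\Omega \overline{f} h\, d\mu = 0$ for all $h \in \B$.

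The condition $\psi(|f|^2) = \|f\|_\infty^2$ translates to $\int_\Omega |f|^2\, d\mu = \|f\|_\infty^2$. Since $\mu$ is a probability measure and $|f|^2 \le \|f\|_\infty^2$ pointwise, this forces $\int_\Omega (\|f\|_\infty^2 - |f|^2)\, d\mu = 0$ with a non-negative continuous integrand; hence $|f| = \|f\|_\infty$ on a set of full $\mu$-measure. Since $\{x \in \Omega : |f(x)| < \|f\|_\infty\}$ is open, this is equivalent to the support of $\mu$ being contained in $\{x \in \Omega : |f(x)| = \|f\|_\infty\}$, as claimed. Conversely, any such probability measure clearly defines a state satisfying both required identities.

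For the final assertion, if $|f|$ attains its norm only at $x_0$, then any regular Borel probability measure whose support is contained in $\{x_0\}$ must equal $\delta_{x_0}$; the orthogonality condition then reduces to $\overline{f(x_0)}\, h(x_0) = 0$ for all $h \in \B$, which, since $f(x_0) \neq 0$, is equivalent to $h(x_0) = 0$ for all $h \in \B$. The main technical point is verifying the support containment from the integral identity, which is where one has to use positivity of the integrand together with continuity of $|f|$; everything else is bookkeeping.
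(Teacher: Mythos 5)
Your proposal is correct and follows exactly the route the paper intends: it is the one-line argument "apply Corollary \ref{52} with $\mathcal I = \C_0(\Omega)$ and realize the resulting state via the Riesz Representation Theorem," fleshed out with the (correct) verification that $\int_\Omega|f|^2\,d\mu=\norm{f}_\infty^2$ for a probability measure $\mu$ is equivalent to $\mathop{\rm supp}(\mu)\subseteq\{x:|f(x)|=\norm{f}_\infty\}$. No discrepancies with the paper's approach.
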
 

Next we obtain a characterization of orthogonality in $\mathscr B(\hc, \hk)$ by using Corollary \ref{52}. To do so, we require the following lemma, proved in \cite{2019}. For $u, v\in\hc$, $u\bar{\tens{}} v$ denotes the finite rank operator of rank one on $\hc$ defined as $u\bar{\tens{}} v(w) = \bra v | w\ket u$ for all $w\in\hc$. Let $\mathscr C_1(\hc)$ denotes the space of trace class on $\hc$, with the trace norm $\norm{\cdot}_1$. 

\begin{lemma}\label{99} (\cite[Lemma 3.8]{2019}) Let $A\in\mathscr B(\hc)$. Let $T\in\mathscr C_1(\hc)$ be a positive element of $\mathscr B(\hc)$ with $\norm{T}_1 =1$ and $\tr(AT) = \norm{A}$. Then there is an atmost countable index set $\mathcal J$, a set of positive numbers $\{s_j : j\in\mathcal J\}$ and an orthonormal set $\{u_j: j\in\mathcal J\} \subseteq \text{Ker}(T)^{\bot}$  such that \begin{enumerate}
\item[(i)]  $\sum\limits_{j\in\mathcal J} s_j =1$ ,
\item[(ii)]$Au_j = \norm{A}u_j$ for each $j\in \mathcal J$, \\ and
\item[(iii)] $T= \sum\limits_{j\in\mathcal J} s_j u_j\bar{\tens{}} u_j$.
\end{enumerate}\end{lemma}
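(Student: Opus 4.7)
The plan is to reduce the statement to a spectral decomposition of $T$ and then extract the desired eigenvector identity for $A$ from the trace equality. First I would invoke the spectral theorem for positive compact operators on $\hc$: since $T\in\mathscr C_1(\hc)$ is positive, there exists an at most countable orthonormal family $\{u_j : j\in\mathcal J\}\subseteq\text{Ker}(T)^\perp$ and strictly positive eigenvalues $s_j>0$ with
$$T = \sum_{j\in\mathcal J} s_j\, u_j\bar{\tens{}} u_j,$$
and the trace norm of a positive operator gives $\sum_{j\in\mathcal J} s_j = \norm{T}_1 = 1$. This immediately yields (i) and (iii), so only (ii) remains.

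Next I would plug the decomposition into $\tr(AT)$ using cyclicity and the identity $\tr(A(u\bar{\tens{}} u)) = \bra u | Au \ket$ for a unit vector $u$, to compute
$$\norm{A} = \tr(AT) = \sum_{j\in\mathcal J} s_j\, \bra u_j | Au_j \ket.$$
Taking real parts and using $\text{Re}\,\bra u_j | Au_j\ket \leq |\bra u_j | Au_j\ket| \leq \norm{A}$ together with $\sum_j s_j = 1$, the identity above displays $\norm{A}$ as a convex combination of real quantities each bounded above by $\norm{A}$. Hence $\text{Re}\,\bra u_j | Au_j\ket = \norm{A}$ for every $j\in\mathcal J$, and combining this with the modulus bound forces the imaginary part to vanish, yielding $\bra u_j | Au_j\ket = \norm{A}$.

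Finally I would upgrade this numerical identity to the operator identity $Au_j=\norm{A}u_j$ via Cauchy--Schwarz. The chain
$$\norm{A} = \bra u_j | Au_j\ket \leq \norm{u_j}\cdot\norm{Au_j} \leq \norm{A}$$
collapses to equalities throughout, which simultaneously forces $\norm{Au_j}=\norm{A}$ and, by the equality case of Cauchy--Schwarz, $Au_j = \lambda u_j$ for some scalar $\lambda$. Taking the inner product against $u_j$ then pins down $\lambda = \bra u_j | Au_j\ket = \norm{A}$, establishing (ii).

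The main subtlety I anticipate is the complex case: $A$ need not be self-adjoint, so individually $\bra u_j | Au_j\ket$ is a priori complex and $\tr(AT)$ could in principle have any argument. The hypothesis $\tr(AT) = \norm{A}$ is precisely what ties together the two roles played by $\norm{A}$ as a real upper bound, and the convexity-plus-modulus argument is arranged so that saturating a real convex bound by a convex sum of complex numbers bounded in modulus forces each summand to coincide with that real bound, both in real part and in imaginary part. Once that numerical rigidity is in hand, the rest is a clean application of the equality case of Cauchy--Schwarz.
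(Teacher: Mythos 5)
Your proof is correct and complete: the spectral decomposition of the positive trace-class operator $T$, the convexity argument forcing $\bra u_j|Au_j\ket=\norm{A}$ for each $j$, and the equality case of Cauchy--Schwarz pinning down $Au_j=\norm{A}u_j$ together establish all three claims, including the careful handling of the a priori complex values $\bra u_j|Au_j\ket$. The paper itself does not reprove this lemma (it is quoted from \cite[Lemma 3.8]{2019}), but your argument is the natural one for this statement and there is nothing to correct.
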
 

We know that $\mathscr K(\hc)^*$ is isomorphic to $\mathscr C_1(\hc)$ and an isomorphism is given by $\mathcal F: \mathscr C_1(\hc)\rightarrow \mathscr K(\hc)^*$ by $\mathcal F(T)(B) = \tr(BT)$ for all $B\in\mathscr K(\hc)$. And $\mathcal F(T)\in \mathcal S_{K(\hc)}$ if and only if $T$ is a positive element of $\mathscr K(\hc)$ with $\norm{T}_1 = 1$. We will be using this identification throughout this paper.

\begin{cor}\label{72} Let $A\in \mathscr B(\hc, \hk)$ be such that $\dist(A, \mathscr K(\hc, \hk))< \norm{A}$. Let $\B\subseteq \mathscr B(\hc, \hk)$ be a subspace. Then $A$ is Birkhoff-James orthogonal to $\B$ if and only if  there exists at most countable set $\mathcal J$, a set of positive numbers $\{s_j: j\in\mathcal J\}$ and an orthonormal set $\{u_j\in\hc: j\in\mathcal J\}$ such that 
\begin{enumerate}
\item[(i)]  $\sum\limits_{j\in\mathcal J} s_j =1$ ,
\item[(ii)]$A^*Au_j = \norm{A}^2u_j$  for each $j\in \mathcal J$, \\ and
\item[(iii)] $\sum\limits_{j\in\mathcal J} s_j\bra Au_j | Bu_j\ket = 0$ for all $B\in\B$. 
\end{enumerate} \end{cor}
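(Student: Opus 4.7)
The plan is to reduce the claim to Corollary \ref{52} applied in the $C^*$-algebra $\mathscr B(\hc)$, with the element $A^*A$ and the subspace $A^*\B := \{A^*B : B\in\B\}$. First I note that Lemma \ref{23} extends verbatim from $C^*$-algebras to bounded operators between distinct Hilbert spaces, since its proof uses only the $C^*$-identity $\norm{T}^2 = \norm{T^*T}$ and the triangle inequality; combined with Proposition \ref{22}, this extends Corollary \ref{88} to the statement that $A$ is Birkhoff-James orthogonal to $\B$ in $\mathscr B(\hc, \hk)$ if and only if $A^*A$ is Birkhoff-James orthogonal to $A^*\B$ in $\mathscr B(\hc)$. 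The distance hypothesis for Corollary \ref{52} is immediate: any $K \in \mathscr K(\hc, \hk)$ with $\norm{A-K} < \norm{A}$ gives $A^*K \in \mathscr K(\hc)$ with $\norm{A^*A - A^*K} \leq \norm{A}\norm{A-K} < \norm{A^*A}$.

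Corollary \ref{52} then equates the Birkhoff-James orthogonality of $A^*A$ to $A^*\B$ with the existence of $\psi \in \mathcal S_{\mathscr K(\hc)}$ satisfying $\psi((A^*A)^2) = \norm{A}^4$ and $\psi(A^*A\,A^*B) = 0$ for all $B \in \B$. To convert this into the data (i)--(iii) I would use the identification $\mathscr K(\hc)^* \cong \mathscr C_1(\hc)$ to write $\psi(X) = \tr(XT)$ for a positive $T \in \mathscr C_1(\hc)$ with $\norm{T}_1 = 1$, extending to $\mathscr B(\hc)$ by the same formula. The crucial observation is that $\tr((A^*A)^2 T) = \norm{A}^4$, combined with the bound $\norm{A^*Au} \leq \norm{A}^2\norm{u}$ and the elementary fact that a positive operator which attains its norm at a unit vector acts there as a scalar, forces every spectral eigenvector of $T$ with positive weight to be an $\norm{A}^2$-eigenvector of $A^*A$. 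This immediately yields $\tr(A^*A\, T) = \norm{A^*A}$, so I can invoke Lemma \ref{99} with the operator $A^*A$ to obtain the countable set $\mathcal J$, the positive weights $\{s_j\}$ with $\sum_j s_j = 1$, the orthonormal $\{u_j\}$ with $A^*A u_j = \norm{A}^2 u_j$, and the representation $T = \sum_j s_j\, u_j \bar{\tens{}} u_j$, giving (i) and (ii).

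For (iii), expanding $\tr(A^*A\,A^*B\,T)$ against the rank-one form of $T$ and applying (ii) inside the inner product reduces $\psi(A^*A\,A^*B) = 0$ to $\norm{A}^2 \sum_j s_j \bra A u_j | B u_j\ket = 0$, giving (iii) since $\norm{A} \neq 0$. The converse is a direct verification: given (i)--(iii), defining $T := \sum_j s_j\, u_j \bar{\tens{}} u_j$ and $\psi(X) := \tr(XT)$ produces a state on $\mathscr K(\hc)$ satisfying the hypotheses of Corollary \ref{52}, hence $A^*A$ is Birkhoff-James orthogonal to $A^*\B$, hence $A$ is Birkhoff-James orthogonal to $\B$. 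The main obstacle I anticipate is the step extracting the spectral structure of $A^*A$ from the single scalar identity $\tr((A^*A)^2 T) = \norm{A}^4$; positivity of $A^*A$ is essential here to convert norm-attainment for $(A^*A)^2$ into an eigenvalue equation for $A^*A$ itself on each spectral direction of $T$.
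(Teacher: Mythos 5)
Your proposal is correct, and it reaches the same endpoint (a positive trace--one $T\in\mathscr C_1(\hc)$ representing the relevant state, followed by Lemma \ref{99}), but it gets there by a genuinely different intermediate step. The paper embeds $\mathscr B(\hc,\hk)$ into $\mathscr B(\hc\oplus\hk)$ as a corner and applies Corollary \ref{52} to (the image of) $A$ itself, which immediately yields a positive $T$ with $\tr(A^*AT)=\norm{A^*A}$ and $\tr(A^*BT)=0$, so Lemma \ref{99} applies at once. You instead extend Lemma \ref{23} and Corollary \ref{88} verbatim to the two--Hilbert--space setting (legitimate, since the proof only uses the $C^*$-identity and the triangle inequality) and then apply Corollary \ref{52} to the positive element $A^*A$ against the subspace $A^*\B$; this produces the conditions $\psi((A^*A)^2)=\norm{A}^4$ and $\psi(A^*A\,A^*B)=0$, one ``power of $A^*A$'' higher than what Lemma \ref{99} wants. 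Your descent from $\tr((A^*A)^2T)=\norm{A}^4$ to $\tr(A^*AT)=\norm{A}^2$ is sound: diagonalizing the positive trace--class $T$, each spectral vector $v_k$ with weight $t_k>0$ must satisfy $\bra v_k|(A^*A)^2v_k\ket=\norm{A}^4$, hence $\norm{A^*Av_k}=\norm{A^*A}$, and a positive operator attaining its norm at a unit vector has that vector as an eigenvector for the top eigenvalue; the identity $TA^*A=\norm{A}^2T$ then converts $\psi(A^*A\,A^*B)=0$ into condition (iii), and the converse direction is the routine verification you describe. The trade-off: the paper's embedding trick avoids your extra spectral argument entirely, while your version avoids the $2\times 2$ corner bookkeeping and makes explicit why the quadratic condition coming from a literal application of Corollary \ref{52} to $A^*A$ collapses to the linear one --- a point the paper's own wording glosses over. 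Both are complete proofs.
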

\begin{proof} We can embed $\mathscr B(\hc, \hk)$ into $\mathscr B(\hc\oplus\hk, \hc\oplus\hk)$ isometrically as $T\rightarrow\begin{bmatrix} 0 & 0 \\ T & 0 \end{bmatrix}$. Then using Corollary \ref{88} for $\mathscr B(\hc\oplus\hk, \hc\oplus\hk)$, we get $A$ is Birkhoff-James orthogonal to $\B$ if and only if $A^*A$ is Birkhoff-James orthogonal to $A^*\B$. Since $\dist(A, \mathscr K(\hc, \hk))< \norm{A}$, then $\dist(A^*A, \mathscr K(\hc))< \norm{A^*A}$. Hence using Corollary \ref{52}, we get that there exists $T\in\mathscr C_1(\hc)$ such that $\tr(A^*AT) = \norm{A^*A}$ and $\tr(A^*BT) = 0$ for all $B\in\B$. And now the result follows using Lemma \ref{99}.
\end{proof} 

Using Corollary \ref{72} and Hausdorff-Toeplitz theorem, we get the following generalizations of Theorem 1.1 of \cite{Bhatia}.

\begin{cor} Let $A\in \mathscr B(\hc, \hk)$ such that $\dist(A, \mathscr K(\hc, \hk))< \norm{A}$. Let $B\in \mathscr B(\hc, \hk)$. Then $A$ is Birkhoff-James orthogonal to $B$ if and only if  there exists a sequence of unit vectors $u_n \in\hc$ such that $\norm{Au_n} = \norm{A}$ for all $n\in\bN$ and $\lim\limits_{n\rightarrow\infty}\bra Au_n | Bu_n\ket = 0.$
\end{cor}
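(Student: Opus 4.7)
The plan is to apply Corollary \ref{72} to the one-dimensional subspace $\bF B$ and then invoke the Hausdorff--Toeplitz theorem to convert the countable convex combination it produces into an approximating sequence of unit vectors. I dispose of the easy direction first: given unit vectors $u_n \in \hc$ with $\norm{Au_n} = \norm{A}$ and $\bra Au_n | Bu_n\ket \to 0$, for every $\lambda \in \bF$,
$$\norm{A + \lambda B}^2 \geq \norm{(A+\lambda B)u_n}^2 = \norm{A}^2 + 2\,\text{Re}(\lambda \bra Au_n | Bu_n\ket) + |\lambda|^2 \norm{Bu_n}^2 \geq \norm{A}^2 + 2\,\text{Re}(\lambda \bra Au_n | Bu_n\ket),$$
and letting $n \to \infty$ yields $\norm{A+\lambda B} \geq \norm{A}$.

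For the forward direction, apply Corollary \ref{72} with $\B = \bF B$ to obtain an at most countable orthonormal system $\{u_j\}_{j \in \mathcal J}$ inside the eigenspace $M := \ker(A^*A - \norm{A}^2 I_{\hc}) \subseteq \hc$ together with positive scalars $s_j$ summing to $1$, such that $\sum_{j\in\mathcal J} s_j \bra Au_j | Bu_j\ket = 0$. Let $P_M$ denote the orthogonal projection of $\hc$ onto $M$ and define the bounded operator $T \in \mathscr B(M)$ by $Tu = P_M(A^*Bu)$. For $u \in M$ we have $P_M u = u$ and $P_M^* = P_M$, so $\bra u | Tu\ket = \bra u | A^*Bu\ket = \bra Au | Bu\ket$; in particular, each $\bra Au_j | Bu_j\ket$ lies in the numerical range $W(T) = \{\bra u | Tu\ket : u \in M, \norm{u} = 1\}$.

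By the Hausdorff--Toeplitz theorem, $W(T)$ is convex. Writing $c_j = \bra Au_j | Bu_j\ket$ and $\sigma_N = \sum_{j=1}^N s_j$, each truncated average $\sigma_N^{-1}\sum_{j=1}^N s_j c_j$ is a genuine finite convex combination of elements of $W(T)$ and therefore lies in $W(T)$; since $|c_j| \leq \norm{A}\norm{B}$ controls the tails, these truncations converge to $\sum_{j\in\mathcal J}s_j c_j = 0$ as $N \to \infty$ (recall $\sigma_N \to 1$). Hence $0 \in \overline{W(T)}$, which furnishes unit vectors $v_n \in M$ with $\bra Av_n | Bv_n\ket = \bra v_n | Tv_n\ket \to 0$, and each such $v_n$ automatically satisfies $\norm{Av_n}^2 = \bra v_n | A^*Av_n\ket = \norm{A}^2$. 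The only mildly subtle step is the passage from the countable convex combination supplied by Corollary \ref{72} to a genuine point of the (possibly non-closed) numerical range, which is precisely the reason only an approximating sequence, rather than a single witnessing unit vector, can be extracted in general.
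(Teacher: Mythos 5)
Your proof is correct and follows exactly the route the paper indicates (it gives no written proof, only the recipe ``Corollary \ref{72} applied to $\bF B$ plus the Hausdorff--Toeplitz theorem''), and you fill in the details soundly: the compression $T = P_M A^*B|_M$ to the eigenspace $M=\ker(A^*A-\norm{A}^2I)$, the truncation argument turning the countable convex combination into a point of $\overline{W(T)}$, and the direct verification of the easy direction are all as intended. No gaps.
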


It is worth mentioning that $A\in \mathscr B(\hc)$ such that the set $H_0 = \{u\in \mathscr B(\hc) : \norm{Au} = \norm{A}\}$ is finite dimensional and $\norm{A}_{H_0^{\perp}} < \norm{A}$, then $\dist(A, \mathscr K(\hc))< \norm{A}$. And now Theorem 2.8 of \cite{arxiv} Theorem 3.1 of \cite{Sain} follows from Corollary \ref{72}.

In Theorem 2.6 of \cite{Keckic 2}, Keckic proved that if $\hc$ is a seperable Hilbert space and $A, B\in \mathscr K(\hc)$, then $\lim\limits_{t\rightarrow 0^+} \dfrac{\norm{A+t B} - \norm{A}}{t} = \max\limits_{\norm{u} = 1, |A|u = \norm{A}u} \mathop{Re }\bra u|U^*Bu\ket$, where $A= U|A|$ is polar decomposition. The next result gives the generalization of this, for any Hilbert space $\hc$ and for $A, B\in\mathscr B(\hc)$ with $\dist(A, \mathscr K(\hc))< \norm{A}$.

\begin{cor}\label{71} Let $0\neq A\in \mathscr B(\hc)$ be such that $\dist(A, \mathscr K(\hc))< \norm{A}$, then for any $0\neq B\in \mathscr B(\hc)$ \begin{align}\label{8000}\lim\limits_{t\rightarrow 0^+} \dfrac{\norm{A+t B} - \norm{A}}{t} &= \max\limits_{ \norm{u} = \norm{v} = 1, Au = \norm{A}v} \mathop{Re }\bra v|Bu\ket\nonumber\\
&=  \dfrac{1}{\norm{A}}\max\limits_{ \norm{u} = 1,  A^*Au = \norm{A}^2u} \mathop{Re }\bra Au|Bu\ket.\end{align}
\end{cor}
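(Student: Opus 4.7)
The plan is to apply Corollary \ref{68} directly with $\A = \mathscr B(\hc)$ and $\mathcal I = \mathscr K(\hc)$. Since $\dist(A,\mathscr K(\hc)) < \norm{A}$ by hypothesis, Corollary \ref{68} gives
$$\lim_{t\to 0^+} \frac{\norm{A+tB}-\norm{A}}{t} = \frac{1}{\norm{A}}\max\{\text{Re }\psi(A^*B) : \psi\in\mathcal S_{\mathscr K(\hc)},\ \psi(A^*A) = \norm{A}^2\}.$$
Using the identification $\psi \leftrightarrow T$ recalled just before Corollary \ref{72}, where $\psi(X) = \tr(XT)$ and $T$ ranges over positive trace-class operators with $\norm{T}_1 = 1$, this becomes a maximum of $\text{Re }\tr(A^*BT)$ subject to $\tr(A^*A T) = \norm{A^*A}$.

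Next I would invoke Lemma \ref{99} applied to $A^*A$: any such $T$ admits a decomposition $T = \sum_{j\in\mathcal J} s_j\, u_j\bar{\tens{}}u_j$ with $\{u_j\}$ orthonormal, $s_j > 0$, $\sum_j s_j = 1$, and $A^*Au_j = \norm{A}^2 u_j$ for each $j$. Then
$$\text{Re }\tr(A^*BT) = \sum_{j\in\mathcal J} s_j\, \text{Re}\bra u_j | A^*Bu_j\ket = \sum_{j\in\mathcal J} s_j\, \text{Re}\bra Au_j | Bu_j\ket,$$
which is a convex combination of the real numbers $\text{Re}\bra Au_j|Bu_j\ket$ where each $u_j$ satisfies $\norm{u_j}=1$ and $A^*Au_j = \norm{A}^2 u_j$. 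Thus the supremum is attained at an extreme point where $T$ is a single rank-one projection, which yields
$$\max\{\text{Re }\psi(A^*B) : \psi \in \mathcal S_{\mathscr K(\hc)},\ \psi(A^*A)=\norm{A}^2\} = \max_{\norm{u}=1,\ A^*Au=\norm{A}^2u} \text{Re}\bra Au | Bu\ket,$$
establishing the second equality in \eqref{8000}.

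For the first equality, I would set up the bijection between the indexing sets. Given $u$ with $\norm{u}=1$ and $A^*Au = \norm{A}^2 u$, one has $\norm{Au}^2 = \bra A^*Au|u\ket = \norm{A}^2$, so $v := Au/\norm{A}$ is a unit vector with $Au = \norm{A} v$, and $\text{Re}\bra Au|Bu\ket = \norm{A}\,\text{Re}\bra v|Bu\ket$. Conversely, if $\norm{u}=\norm{v}=1$ and $Au=\norm{A}v$, then $\norm{Au}=\norm{A}$, so $\bra A^*Au|u\ket = \norm{A^*A}\norm{u}^2$; since $A^*A$ is positive with operator norm $\norm{A}^2$, this equality forces $A^*Au = \norm{A}^2 u$. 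Dividing by $\norm{A}$ then converts the second expression in \eqref{8000} into the first.

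The main obstacles are essentially bookkeeping: tracking the identifications state $\leftrightarrow$ trace-class operator $\leftrightarrow$ spectral decomposition, and verifying that $\norm{Au}=\norm{A}$ is equivalent to $A^*Au = \norm{A}^2 u$. Both steps are short and follow from results already in hand (Lemma \ref{99} for the first, and the standard fact that a positive operator $P$ attaining $\bra Pu|u\ket = \norm{P}\norm{u}^2$ on a unit vector satisfies $Pu = \norm{P}u$ for the second).
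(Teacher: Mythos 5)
Your proposal is correct and follows essentially the same route as the paper: reduce to a maximum over trace-class states via the $\dist(A,\mathscr K(\hc))<\norm{A}$ hypothesis, decompose the optimal $T$ by Lemma \ref{99} applied to $A^*A$, and pass from the convex combination to a single unit vector. If anything, your explicit appeal to Corollary \ref{68} (rather than Theorem \ref{51}, which the paper cites) is the more accurate reference, since restricting the maximum to states of $\mathscr K(\hc)$ is exactly what that corollary provides; your verification that $Au=\norm{A}v$ is equivalent to $A^*Au=\norm{A}^2u$ also fills in a step the paper leaves implicit.
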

\begin{proof} Using Theorem \ref{51}, we have \begin{align*}\lim\limits_{t\rightarrow 0^+} \dfrac{\norm{A+t B} - \norm{A}}{t} =  \dfrac{1}{\norm{A}}\max\bigg\{\text{Re }\tr(A^*BT) & :  T \text{ is a positive element of } \mathscr K(\hc), \\
&  \norm{T}_1 = 1, \tr(A^*AT) = \norm{A}^2\bigg\}.\end{align*} Now using Lemma \ref{99}, we get  \begin{align*}\lim\limits_{t\rightarrow 0^+} \dfrac{\norm{A+t B} - \norm{A}}{t} =  \dfrac{1}{\norm{A}}\max\bigg\{\sum\limits_{j\in \mathcal J}s_j\bra A^*Bu_j|u_j\ket & :  s_j>0, \sum_js_j =1,\norm{u_j} = 1,\\
& A^*Au_j = \norm{A}^2u_j, \mathcal J \text{ is a countable set}\bigg\}.\end{align*}This gives the required result.\end{proof} 

The immediate consequence of Corollary \ref{71} is a characterization of smooth points of unit ball of $\mathscr B(\hc)$, which was first given in  \cite[Theorem 3.1]{Abatzoglou} (for $\bF = \bR$) and then an alternative proof was given in \cite[Corollary 3.3]{Keckic} (for $\bF = \bR$ or $\bC$). The proof we have given is simpler. 

\begin{cor}\label{78} An operator $A$  is a smooth point of the unit ball of $\mathscr B(\hc)$ if and only if $A$ attains its norm at $\pm h$ with $\norm{h} =1$ and $\sup\limits_{x\perp h, \norm{x} =1}\norm{Ax} <\norm{A}$. In that case, $\lim\limits_{t\rightarrow 0} \dfrac{\norm{A+t B} - \norm{A}}{t} = \mathop{Re }\bra Ah|Bh\ket$.
\end{cor}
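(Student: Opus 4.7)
The plan is to read off both directions directly from Corollary~\ref{71}, which supplies the one-sided Gateaux derivative whenever $\dist(A,\mathscr K(\hc)) < \norm{A}$.

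For sufficiency, assume $\norm{A}=1$, that $A$ attains its norm at $\pm h$ with $\norm{h}=1$, and $c := \sup\{\norm{Ax} : x \perp h,\,\norm{x}=1\} < 1$. Let $P$ be the orthogonal projection onto $\bF h$; then $AP$ is rank one and hence compact, while $A-AP$ vanishes on $\bF h$ and agrees with $A$ on $h^\perp$, so $\norm{A-AP}=c$. Thus $\dist(A,\mathscr K(\hc)) \leq c < \norm{A}$, and Corollary~\ref{71} applies to $A$. The gap $c<\norm{A}$ also forces the $\norm{A}^2$-eigenspace of $A^*A$ to equal $\bF h$, so every unit vector $u$ with $A^*Au=\norm{A}^2 u$ has the form $u=\lambda h$ with $|\lambda|=1$, and then $\mathop{Re }\bra Au|Bu\ket = \mathop{Re }\bra Ah|Bh\ket$. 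Applying Corollary~\ref{71} once to $B$ and once to $-B$ yields
\[\lim_{t\to 0^+}\frac{\norm{A+tB}-\norm{A}}{t} = \mathop{Re }\bra Ah|Bh\ket = -\lim_{t\to 0^+}\frac{\norm{A-tB}-\norm{A}}{t},\]
so the two-sided limit exists and equals $\mathop{Re }\bra Ah|Bh\ket$. Gateaux differentiability at $A$ is equivalent to smoothness of the unit ball at $A$, and this is the stated formula.

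For the converse, suppose $A$ is a smooth point, so $\norm{A}=1$ and $\partial\norm{A}$ is a singleton. For any pair of unit vectors $(u,v)$ with $Au=v$, the functional $\phi_{u,v}(B):=\mathop{Re }\bra Bu|v\ket$ has norm one and satisfies $\phi_{u,v}(A)=1$, so $\phi_{u,v}\in\partial\norm{A}$. Uniqueness forces all such $\phi_{u,v}$ to coincide, which pins down the set of norm-attaining unit vectors to a single unimodular orbit $\{\lambda h : |\lambda|=1\}$; equivalently, the top eigenspace of $A^*A$ is $\bF h$. For the gap, suppose toward a contradiction that there are unit vectors $x_n\perp h$ with $\norm{Ax_n}\to 1$. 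Passing to a subsequence, $x_n$ converges weakly; the weak limit cannot be a nonzero norm-attaining unit vector orthogonal to $h$ by what we just proved, so $x_n\rightharpoonup 0$. Setting $y_n:=Ax_n/\norm{Ax_n}$, any weak-$*$ cluster point of $\phi_{x_n,y_n}$ lies in $\partial\norm{A}$ but annihilates every compact operator (since $\bra Kx_n|y_n\ket\to 0$ when $K$ is compact), hence differs from the support functional derived from $h$, which does not vanish on the rank-one operator $AP$. This contradicts smoothness, so $c<1$.

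The main obstacle is the final weak-$*$ argument in the necessity direction: the approximating sequence $\{x_n\}$ need not converge strongly, so one must pass to a subsequence and exploit the fact that compact operators send weakly null sequences to strongly null sequences in order to guarantee that the resulting cluster point functional is genuinely distinct from the one associated with $h$.
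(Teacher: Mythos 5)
Your sufficiency argument is essentially the paper's: apply Corollary \ref{71} to $B$ and to $-B$ and observe the two one-sided limits agree. You are in fact more careful than the paper on two points it glosses over --- you verify the hypothesis $\dist(A,\mathscr K(\hc))<\norm{A}$ via the rank-one perturbation $AP$, and you check that the gap condition collapses the eigenspace $\{u : A^*Au=\norm{A}^2u\}$ to $\bF h$ so that the maximum in \eqref{8000} is literally $\mathop{Re}\bra Ah|Bh\ket$. That half is correct.

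The necessity direction is where you diverge from the paper (which simply cites Kittaneh--Younis for ``$\dist(A,\mathscr K(\hc))=\norm{A}$ implies not smooth'' and Hennefeld for uniqueness of the norming vector), and your self-contained replacement has a genuine hole: you never show that a smooth $A$ \emph{attains} its norm. Your ``single unimodular orbit'' conclusion is vacuous if the set of norming vectors is empty, and everything after it presupposes the existence of $h$. If $A$ is smooth but non-attaining, your construction produces only the one support functional killing $\mathscr K(\hc)$ and there is no second functional $\phi_{h,Ah}$ to contradict uniqueness; ruling this case out is exactly the content of the Kittaneh--Younis result the paper leans on, and it does not come for free. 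A second, smaller defect: the inference ``the weak limit cannot be a nonzero norm-attaining \emph{unit} vector, so $x_n\rightharpoonup 0$'' is not valid as stated, since a weak limit $x$ of unit vectors need only satisfy $\norm{x}\leq 1$. The step is salvageable: writing $x_n=x+z_n$ with $z_n\rightharpoonup 0$, one has $\norm{z_n}^2\to 1-\norm{x}^2$ and hence $1=\lim\norm{Ax_n}^2\leq\norm{Ax}^2+1-\norm{x}^2$, so $\norm{Ax}=\norm{x}$ and a nonzero $x$ would normalize to a norming vector in $h^{\perp}$, contradicting the first part. You should either supply that computation and separately dispose of the non-attaining case, or do as the paper does and quote \cite{Kittaneh} and \cite{Henn} for the converse.
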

\begin{proof} It was shown in Theorem 1 of \cite{Kittaneh}, if $\dist(A, \mathscr K(\hc)) = \norm{A}$, then $A$ is not a smooth point of the unit ball of $\mathscr B(\hc)$. It is well known fact that  if $A$ is a smooth point, then it can't attain its norm at more than one point (see \cite[Theorem 2.1]{Henn}). Hence we only need to prove that if $A$ attains its norm at $\pm h$ with $\norm{h} =1$ and $\sup\limits_{x\perp h, \norm{x} =1}\norm{Ax} <\norm{A}$, then $A$ is a smooth point. In this case, using Corollary \ref{71}, $$\lim\limits_{t\rightarrow 0^{+}} \dfrac{\norm{A+t B} - \norm{A}}{t} =   \dfrac{1}{\norm{A}}\text{ Re }\bra Ah|Bh\ket.$$ And we also have \begin{eqnarray*}\lim\limits_{t\rightarrow 0^{-}} \dfrac{\norm{A+t B} - \norm{A}}{t} &=& \lim\limits_{t\rightarrow 0^{-}} \dfrac{\norm{A-t(- B)} - \norm{A}}{t} = -\lim\limits_{t\rightarrow 0^{+}} \dfrac{\norm{A+t(- B)} - \norm{A}}{t} \\
&=& - \dfrac{1}{\norm{A}}\text{ Re }\bra Ah|-Bh\ket = \dfrac{1}{\norm{A}}\text{ Re }\bra Ah|Bh\ket.\end{eqnarray*}

Hence $\lim\limits_{t\rightarrow 0} \dfrac{\norm{A+t B} - \norm{A}}{t}$ exists and equal to $\text{ Re }\bra Ah|Bh\ket$. Since norm function is Gateaux differentiable at $A$, $A$ is a smooth point of the unit ball of $\mathscr B(\hc)$.
\end{proof}

The special case of the above theorem in the setting of $\mathscr{K}(\hc)$  for separable Hilbert space was first proved in \cite[Theorem 3.3]{Holub}. Using the above expression of Gateaux derivative and the fact that $\mathscr C_1(\hc)$ is the dual of $\mathscr{K}(\hc)$, we get that rank one operators are extreme points of $\mathscr C_1(\hc)$, along the lines of proof of \cite[Corollary 3.3]{Abatzoglou}. Characterization of the extreme points of $\mathscr C_1(\hc)$ was first proved in \cite[Theorem 3.1]{Holub}. Study of extreme points and smooth points  has been a subject of interest for many authors, see \cite{Abatzoglou, Heinrich,Holub, Kittaneh, Schatten}. Subdifferential sets in space of matrices $M_n(\bF)$ equipped with various norms have also been an interest to many authors, see \cite{2020} for a brief description. Using Corollary \ref{71}, we now give the following expression for the subdifferential set $\partial \|A\|$ when $\dist(A, \mathscr K(\hc))< \norm{A}$, with the idea used in \cite[Theorem 2]{Watson}.  

\begin{cor}\label{999}  Let $0\neq A\in \mathscr B(\hc)$ be such that $\dist(A, \mathscr K(\hc))< \norm{A}$, then \begin{equation*}\partial\norm{A} = \conv\{u\bar{\tens{}} v:\norm{u} = \norm{v} = 1, Au = \norm{A}v\}\end{equation*}\end{cor}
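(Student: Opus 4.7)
The plan is to combine the Gateaux-derivative formula of Corollary \ref{71} with equation (\ref{67}) and a Hahn-Banach duality argument. Let $K_0 = \{u\bar{\tens{}} v : \norm{u}=\norm{v}=1,\ Au = \norm{A}v\}$, where each rank-one operator is identified with the functional $B \mapsto \bra v|Bu\ket$ on $\mathscr{B}(\hc)$. For the inclusion $\conv(K_0) \subseteq \partial\norm{A}$, I would verify by a direct computation that every $u\bar{\tens{}} v \in K_0$ has functional norm $1$ and satisfies $(u\bar{\tens{}} v)(A) = \bra v|Au\ket = \norm{A}$; closedness and convexity of $\partial\norm{A}$ then promote this to the full closed convex hull.

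For the reverse inclusion, I would argue by contradiction. Suppose some $f \in \partial\norm{A} \setminus \conv(K_0)$. The Hahn-Banach theorem applied in the weak*-topology on $\mathscr{B}(\hc)^*$ (whose continuous linear functionals are precisely the point evaluations at elements $B \in \mathscr{B}(\hc)$) produces $B \in \mathscr{B}(\hc)$ strictly separating $f$ from $\conv(K_0)$:
$$\sup_{g \in \conv(K_0)} \text{Re } g(B) < \text{Re } f(B).$$
The left-hand side equals $\sup_{g \in K_0} \text{Re } g(B)$, which by Corollary \ref{71} is the right-sided Gateaux derivative of $\norm{\cdot}$ at $A$ in direction $B$, and by (\ref{67}) that derivative also equals $\max_{h \in \partial\norm{A}} \text{Re } h(B) \geq \text{Re } f(B)$. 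This contradicts the strict separation and closes the argument.

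The main step requiring care is the weak*-closedness of $\conv(K_0)$, since the Hahn-Banach separation of $f$ from a convex set by a weak*-continuous functional requires the set to be weak*-closed. This is where the hypothesis $\dist(A, \mathscr{K}(\hc)) < \norm{A}$ does real work: it forces $\norm{A}^2$ to be isolated in the spectrum of $A^*A$ with a finite-dimensional eigenspace $H_0 = \ker(A^*A - \norm{A}^2 I)$, and the relation $Au = \norm{A}v$ then confines $u$ to $H_0$ and $v$ to $A(H_0)$. Therefore $K_0$ lies in a finite-dimensional subspace of $\mathscr{C}_1(\hc)$, so $\conv(K_0)$ is already norm-closed and weak*-closedness follows automatically. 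With this topological point secured, the bipolar-type support-function identity above identifies $\partial\norm{A}$ with $\conv(K_0)$.
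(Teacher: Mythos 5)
Your argument is correct and follows essentially the same route as the paper: the easy inclusion, then a Hahn--Banach separation of a hypothetical $f\in\partial\norm{A}$ outside the hull, contradicted by comparing Corollary \ref{71} with equation \eqref{67}. Your execution is in fact slightly cleaner at two points where the paper is terse: separating in $\mathscr B(\hc)^*$ with the topology $\sigma(\mathscr B(\hc)^*,\mathscr B(\hc))$ sidesteps the paper's unproved claim that $\partial\norm{A}\subseteq\mathscr C_1(\hc)$, and your observation that $\ker(A^*A-\norm{A}^2I)$ is finite dimensional (so that $K_0$ lies in a finite-dimensional subspace) supplies exactly the closedness/compactness of $\conv\{u\bar{\tens{}}v\}$ that the separation step silently requires.
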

\begin{proof} Clearly  $\conv\{u\bar{\tens{}} v:\norm{u} = \norm{v} = 1, Au = \norm{A}v\}\subseteq\partial\norm{A}$. If equality doesn't occur, then there exists $T\in\partial\norm{A}\subseteq \mathscr C_1$ such that $T\notin\conv\{u\bar{\tens{}} v:\norm{u} = \norm{v} = 1, Au = \norm{A}v\}$. Then by the Hahn-Banach seperation theorem, there exists $B\in \mathscr K(\hc)$ such that $\tr(TB) = 1$ and $\tr(B(u\bar{\tens{}} v)) = \bra u|Bv\ket= 0$ for all $u.v\in\hc$ such that $\norm{u} = \norm{v} = 1, Au = \norm{A}v$.

Hence $\max\limits_{\norm{u} = \norm{v} = 1, Au = \norm{A}v} \text{ Re } \bra u|Bv\ket < \tr(BT) \leq \max\{\tr(BT) : T\in\partial\norm{A}\} = \lim\limits_{t\rightarrow 0^+} \dfrac{\norm{A+t B} - \norm{A}}{t},$

which contradicts Corollary \ref{71}.
\end{proof}

\section{Remarks}\label{remarks}

\begin{enumerate}
\item Let $A, B\in \mathscr B(\hc)$. Let $H_{\eps}$ stands for $E_{A^*A}((\norm{A}-\eps)^2, \norm{A}^2)$, and $E_{A^*A}$ stands for the spectral measure of operator $A^*A$. It was proved in \cite[Theorem 2.4]{Keckic} that \begin{equation}\lim\limits_{t\rightarrow 0^+} \dfrac{\norm{A+t B} - \norm{A}}{t} = \dfrac{1}{\norm{A}}\inf\limits_{\eps>0}\sup\limits_{h\in H_{\eps}, \norm{h} = 1} \text{Re} \bra Ah|Bh\ket.\label{9000}\end{equation} 
We will give an alternative proof of Theorem \ref{51} as an application of \eqref{9000}. Let $\alpha = \norm{A}\lim\limits_{t\rightarrow 0^{+}} \dfrac{\norm{A+t B} - \norm{A}}{t}$. Then we get a sequence $\{h_n\}$ such that $\norm{h_n} =1$, $h_n\in H_{1/n}$ and $\text{Re}\bra h_n|A^*Bh_n\ket\rightarrow \alpha$. Now $h_n\in H_{1/n}$ implies that $\norm{Ah_n}^2\in ((\norm{A}-1/n)^2, \norm{A}^2)$. We can choose a subsequence, if necessary, so that $\norm{Ah_n}\rightarrow \norm{A}$. Now define $\psi_n(T) = \bra h_n|Th_n\ket$. Then $\psi_n\in\mathcal S_{\mathscr B(H)}$ such that $\psi_n(A^*A)\rightarrow \norm{A}^2$ and $\psi_n(A^*B) \rightarrow \alpha$. Now using wea$k^*$-compactness of $\mathcal S_{\mathscr B(H)}$, there exists $\psi\in\mathcal S_{\mathscr B(H)}$ such that $\psi(A^*A) = \norm{A}^2$ and $\psi(A^*B) = \alpha$. By using the GNS construction, we get the proof of Theorem \ref{51}.

\item Using the Riesz Representation Theorem with Corollary \ref{68}, we get that if $f\in \C_b(\Omega)$ such that $\dist(f, \C_0(\Omega))< \norm{f}_{\infty}$, then for any $g\in \C_b(\Omega)$, we have
\begin{equation}\label{10000}\lim\limits_{t\rightarrow 0^+} \dfrac{\norm{f+t g}_{\infty} - \norm{f}_{\infty}}{t} = \sup\{\text{Re }(e^{-\iota arg f(x)}g(x)) : x\in\{z: |f(x)| = \norm{f}_{\infty}\}\}.\end{equation}
This is special case of \cite[Theorem 3.1]{Keckic 1} where it was proved that if $f,g\in \C_b(\Omega)$, then \begin{equation}\label{7000}\lim\limits_{t\rightarrow 0^{+}} \dfrac{\norm{f+t g} - \norm{f}}{t} = \inf\limits_{\delta>0}\sup\limits_{x\in E_{\delta}}\text{Re}(e^{-i\text{ arg } f(x)}g(x)),\end{equation} where $E_{\delta} = \{x\in \Omega : |f(x)| \geq\norm{f}_{\infty}-\delta\}$. Note that \eqref{10000} is a special case of \eqref{7000}. Similarly formula for Gateaux derivative obtained in \eqref{8000} is a special case of \eqref{9000}. Now equation \ref{10000} and formula \eqref{8000}, were obtained as the applications of Corollary \ref{68}. It raises question what can be analogous of Corollary \ref{68}, without the condition $\dist(f, \C_0(\Omega)) < \norm{f}_{\infty}$, that will be generalization of formulas \eqref{9000} and \eqref{7000}?

\end{enumerate}

\subsection*{Acknowledgements}

I would like to thank Amber Habib and Priyanka Grover, Shiv Nadar University for many useful discussions.

\end{document}